\documentclass[letterpaper, 10pt, conference]{ieeeconf}

\IEEEoverridecommandlockouts
\overrideIEEEmargins
% See the \addtolength command later in the file to balance the column lengths
% on the last page of the document

\usepackage[colorlinks=true, pdfstartview=FitV, linkcolor=black, citecolor= black, urlcolor= black]{hyperref}
\usepackage[english]{babel}
\usepackage{graphicx,subcaption}
\usepackage{amsmath,amssymb,mathrsfs}
% \interdisplaylinepenalty=2500
% \usepackage{cases}
\usepackage{bm}
\usepackage{dsfont}% for mathbb 
\usepackage{physics}
\usepackage{xr}
\usepackage{algorithm,algpseudocodex}
\usepackage{lipsum}
\usepackage{centernot}
\usepackage{balance}
\usepackage{mathtools}
\usepackage{cite}
\usepackage{textcomp}
\usepackage{xcolor}

\usepackage{tikz}

\newcommand\submittedtext{%
  \footnotesize 
  © 2025 IEEE.  Personal use of this material is permitted.  Permission from IEEE must be obtained for all other uses, in any current or future media, including reprinting/republishing this material for advertising or promotional purposes, creating new collective works, for resale or redistribution to servers or lists, or reuse of any copyrighted component of this work in other works.
  Accepted for the 2025 IEEE Conference on Decision and Control (CDC).}

\newcommand\submittednotice{%
\begin{tikzpicture}[remember picture,overlay]
\node[anchor=south,yshift=10pt] at (current page.south) {\fbox{\parbox{\dimexpr1.0\textwidth-\fboxsep-\fboxrule\relax}{\submittedtext}}};
\end{tikzpicture}%
}

 % fixed size norm operator
 % fixed size norm operator
\newcommand{\R}{\mathbb{R}}

\newcommand{\Z}{\mathbb{Z}}
\newcommand{\PSD}{\mathbb{S}_+}
\newcommand{\PD}{\mathbb{S}_{++}}

\newcommand{\cov}[1]{\mathrm{Cov}[#1 ]}
\let\P\relax
\newcommand{\P}[1]{\mathbb{P}\left[#1\right]}
\let\E\relax
\newcommand{\E}[1]{\mathbb{E}\left[#1\right]}

\usepackage{cleveref}
\crefname{align}{}{}
\crefname{equation}{}{}
\crefname{figure}{Fig.}{Figs.}
\crefname{table}{Table}{Tables}
\crefname{theorem}{Theorem}{Theorems}
\crefname{definition}{Definition}{Definitions}
\crefname{lemma}{Lemma}{Lemmas}
\crefname{remark}{Remark}{Remarks}
\crefname{assumption}{Assumption}{Assumptions}
\crefname{proof}{Proof}{Proofs}
\crefname{algorithm}{Algorithm}{Algorithms}
\crefname{problem}{Problem}{Problems}
\crefname{proposition}{Proposition}{Propositions}
\crefname{corollary}{Corollary}{Corollaries}
\crefname{section}{Section}{Sections}
\crefname{proof}{Proof}{Proofs}

 % this resolves the conflict with IEEEconf

\usepackage{amsthm}

\newtheorem{remark}{Remark}
\newtheorem{theorem}{Theorem}
\newtheorem{lemma}{Lemma}
\newtheorem{assumption}{Assumption}

\begin{document}

%  Set figure path
\graphicspath{{./figures/}}

\title{\LARGE \bf
Hands-Off Covariance Steering: Inducing Feedback Sparsity via Iteratively Reweighted $\ell_{1,p}$ Regularization
\thanks{This work was supported by the U.S. Air Force Office of Scientific Research through research grant FA9550-23-1-0512. 
N. Kumagai acknowledges support for his graduate studies from the Shigeta Education Fund. The authors are with the School of Aeronautics and Astronautics, Purdue University, West Lafayette, Indiana, 47907, USA. Emails: 
    nkumagai@purdue.edu,
    koguri@purdue.edu
}%
}%

% \author{\IEEEauthorblockN{Naoya Kumagai} \and \IEEEauthorblockN{Kenshiro Oguri}}%
\author{Naoya Kumagai and Kenshiro Oguri}%

\maketitle%
\thispagestyle{empty}
\pagestyle{empty}

\submittednotice

\begin{abstract}
    We consider the problem of optimally steering the state covariance matrix of a discrete-time linear stochastic system
    to a desired terminal covariance matrix, while inducing the control input to be zero over many time intervals.
    We propose to induce sparsity in the feedback gain matrices by using a sum-of-norms version of the iteratively reweighted $\ell_1$-norm minimization.
    We show that the lossless convexification property holds even with the regularization term.
    Numerical simulations show that the proposed method produces a Pareto front of transient cost and sparsity
    that is not achievable by a simple $\ell_1$-norm minimization and closely approximates the $\ell_0$-norm minimization
    obtained from brute-force search.
\end{abstract}

\section{Introduction}
Sparsity is important for  the control and identification of systems.
It is often represented by the $\ell_0$-(pseudo) norm, which is the number of nonzero elements in a vector.
For example, a sparse signal recovery problem can be posed as
\begin{equation} \label{eq:L0-norm_minimization}
    \min_x \ \|x\|_0 \quad \text{s.t.} \quad y = Ax
\end{equation}
where $x$ is the signal to be recovered, $y$ is the noisy measurement, and $A$ is the measurement matrix.
Unfortunately, the $\ell_0$-norm is non-convex and nondifferentiable, and \cref{eq:L0-norm_minimization} is NP-hard \cite{natarajanSparseApproximateSolutions1995}.
On the other hand, the $\ell_1$-norm is convex, and its sparsity-inducing property has been demonstrated in various applications
\cite{tibshiraniRegressionShrinkageSelection1996,chenAtomicDecompositionBasis2001,candesEnhancingSparsityReweighted2008,boydConvexOptimization2004}. 
In system identification, perhaps the most well-known algorithm for inducing sparsity is the LASSO algorithm \cite{tibshiraniRegressionShrinkageSelection1996}.
Compared to ordinary least squares, the LASSO algorithm adds a constraint on the $\ell_1$-norm of the parameter vector,
which induces sparsity in the model selection. Basis Pursuit \cite{chenAtomicDecompositionBasis2001}
uses the $\ell_1$-norm in the objective function for sparse signal recovery, solving the problem
\begin{equation}
    \min_x \ \|x\|_1 \quad \text{s.t.} \quad y = Ax .
\end{equation}
The applications of the $\ell_1$-norm regularization are vast, and we refer the reader to 
\cite{candesEnhancingSparsityReweighted2008} for a comprehensive review.

Sparsity can be especially favorable in control of systems where 
every control input incurs a large cost (e.g. financial/human), such as in multi-period investment problems \cite{boydPerformanceBoundsSuboptimal2013}, 
economics \cite{leonardOptimalControlTheory1992}, and on-ground computation of spacecraft trajectory correction maneuvers \cite{oguriChanceConstrainedControlSafe2024b}.
As pointed out in \cite{pakazadSparseControlUsing2013}, an optimal control problem with sparsity consideration can be considered as a multi-objective optimization problem 
that trades control performance and control input sparsity.
Ref. \cite{hassibiLowAuthorityControllerDesign1999a} solves an $\ell_1$-norm minimization problem to design sparse feedback gains, i.e.
only a few sensor-to-actuator paths are used for feedback control. 
Ref. \cite{ohlssonTrajectoryGenerationUsing2010} solves a sum-of-norms regularization problem to track waypoints
while also regularizing the sum-of-norms of the impulse train which constitute the input signal.
Sum-of-norms regularization is a generalization of the $\ell_1$-norm regularization, where the parameter vector is divided into groups,
and the sum of the $p$-norms of the groups is minimized, promoting \textit{group sparsity}.
Ref. \cite{nagaharaMaximumHandsOffControl2016} shows the equivalence between the sparsest control and $\ell_1$-optimal control.
Recent works consider both the transient cost and sparsity, for example, for linear quadratic regulator (LQR) problems.
Representative solution methods are the alternating direction method of multipliers (ADMM) \cite{jovanovicSparseQuadraticRegulator2013}, 
branch and bound \cite{gaoCardinalityConstrainedLinearQuadratic2011}, and $\ell_1$-norm regularization \cite{zhangLinearQuadraticTracking2021}.
Some works refer to control problems with sparsity consideration as hands-off control, which 
gives the name \textit{hands-off covariance steering} for our problem.

Covariance steering \cite{hotzCovarianceControlTheory1987,bakolasOptimalCovarianceControl2016,okamotoOptimalCovarianceControl2018,chenOptimalSteeringLinear2018,liuOptimalCovarianceSteering2024,kumagaiChanceConstrainedGaussianMixture2024}
is a problem of finding the optimal feedback gain matrices that minimize
the expected quadratic cost function, while steering the state covariance matrix to a desired terminal covariance matrix under linear noisy dynamics.
Combined with control sparsity, hands-off covariance steering can control the state distribution to satisfy distributional/chance constraints while
keeping the control input zero for many time intervals, relieving the implicit cost for implementing controls in real-world systems.

As the solution method, we return to the question posed by \cite{candesEnhancingSparsityReweighted2008} for sparse signal recovery: 
``Can we improve upon $\ell_1$-norm minimization to better approximate $\ell_0$-norm minimization?" 
The influential work \cite{candesEnhancingSparsityReweighted2008} proposes to
solve the iteratively reweighted $\ell_1$-norm minimization (IRL1)%
\begin{equation} \label{eq:IRL1}
    \min_x \ \sum_i w_i |x_i| \quad \text{s.t.} \quad y = Ax
\end{equation}%
with weights $w_i$ updated based on the solution of the previous iteration. 
The authors show that this recovers the original signal $x$ 
more accurately than the unweighted $\ell_1$-norm minimization.
While a common approach in $\ell_1$-norm regularization is to tune the scalar regularization parameter to achieve the desired sparsity,
we observed that for covariance steering, even extremely large regularization parameters failed to produce sufficiently sparse solutions.

Our main contributions are as follows:
\begin{itemize}
    \item We propose a method to induce sparse control in the chance-constrained covariance steering problem
    by combining the iteratively reweighted $\ell_{1}$-norm minimization with sum-of-norms regularization.
    \item We show that the lossless convexification property holds with the sparsity-promoting regularization term, i.e. 
    the originally nonconvex problem can be relaxed to a convex one (here, a semidefinite program) and the solution to the 
    relaxed problem is also a solution to the original problem. 
    This enables efficient computation via solving a series of semidefinite programming problems.
\end{itemize}
% The rest of the paper is organized as follows. 
% \cref{sec:preliminaries} provides a brief review of covariance steering, $\ell_1$-norm regularization, and its iteratively reweighted version.
% \cref{sec:sparse_covariance_feedback} introduces the hands-off covariance steering problem and its theoretical justification.
% \cref{sec:numerical_example} demonstrates the solution obtained with the proposed method.

\textit{Notation:} 
$\norm{\cdot}_0$, $\norm{\cdot}_1$, $\norm{\cdot}_2$, $\norm{\cdot}_F$, $\tr(\cdot)$, 
$\E{\cdot}$, $\cov{\cdot}$, $\P{\cdot}$, $\det(\cdot)$, $\lambda_{\max}(\cdot)$ denote 
the $\ell_0$, $\ell_1$, $\ell_2$, Frobenius norm, trace, expectation, covariance, probability, determinant, and maximum eigenvalue, respectively.
$\R, \R^n, \R^{n \times m}$ denote real numbers, $n$-dimensional real-valued vectors, and $n \times m$ real-valued matrices, respectively.
$\PSD^n$ ($\PD^n$) denotes the set of $n \times n$ symmetric positive semidefinite (definite) matrices.
$\Z_{a:b}$ denotes the set of integers from $a$ to $b$. 
The symbol $\succeq$ ($\succ$) denotes (strict) matrix inequality between symmetric matrices.
\section{Preliminaries} \label{sec:preliminaries}
In this section, we provide a brief review of covariance steering, $\ell_1$-norm regularization, and its reweighted version.
\subsection{Covariance Steering}
Covariance steering aims to design a feedback control policy that minimizes the expected cost function while steering the covariance matrix of the state to a desired terminal covariance matrix.
Recently, both the continuous-time \cite{chenOptimalSteeringLinear2018} and discrete-time \cite{bakolasOptimalCovarianceControl2016,okamotoOptimalCovarianceControl2018,liuOptimalCovarianceSteering2024} versions of the problem have been studied.
Here, we consider the finite-horizon, discrete-time version with horizon $N$:%
\begin{subequations} \label{prob:covariance_steering}
\begin{align}
    \min_\pi &\quad J_{\Sigma} := \E{\sum_{k=0}^{N-1} x_k^\top Q_k x_k + u_k^\top R_k u_k } \\ 
    \text{s.t.} \quad &x_{k+1} = A_k x_k + B_k u_k + D_k w_k, \quad  k \in \Z_{0:N-1} \label{eq:ltv_dynamics}\\
    &\E{x_0} = \mu_0, \ \E{x_N} = \mu_N\\
    &\cov{x_0} = \bar{\Sigma}_0, \ \cov{x_N} \preceq \bar{\Sigma}_N   \label{eq:boundary_conditions}\\
    &u_k = \pi(x_k, k), \quad  k \in \Z_{0:N-1}
\end{align}
\end{subequations}
where $x_k \in \R^n$, $u_k \in \R^m$ are the state and control input at time $k$, respectively.
$w_k \in \R^p$ is a white, zero-mean, Gaussian noise with identity covariance matrix.   
$A_k \in \R^{n \times n}$, $B_k \in \R^{n \times m}$, $D_k \in \R^{n \times p}$ are the system matrices.
$Q_k \in \PSD^{n}$, $R_k \in \PD^n$ are the state and control input cost matrices.
$\mu_0 \in \R^n$, $\mu_N \in \R^n$ are the initial and terminal state mean vectors.
$\bar{\Sigma}_0 \in \PD^n$, $\bar{\Sigma}_N \in \PD^{n}$ are the initial and terminal covariance matrices, with
$\bar{\Sigma}_N \succ D_{N-1} D_{N-1}^\top$.

\begin{assumption} \label{assum:invertible_state_transition}
    $A_k$ is invertible for $k \in \Z_{0:N-1}$.
\end{assumption}
This assumption is reasonable, since the discretized state transition matrix of a continuous-time dynamical system is invertible.

\begin{assumption} \label{assum:nondegenerate_covariance}
    $\cov{x_k} \succ 0$ for $k \in \Z_{0:N}$.
\end{assumption}
This assumption is also reasonable, as most real-life problems involve nondegenerate covariance matrices.

\begin{assumption} \label{assum:zero_mean}
    $\mu_0 = \mu_N = 0$,
$\E{x_k} = 0$ for $k \in \Z_{0:N}$.
\end{assumption}
This assumption is made for simplicity and allows us to focus on the covariance dynamics in this paper.

We consider an affine feedback policy, which has been shown to be optimal 
\cite{bakolasOptimalCovarianceControl2016,liuOptimalCovarianceSteering2024}
in the unconstrained covariance steering setting\footnote{The feedforward term which exists in \cite{liuOptimalCovarianceSteering2024}
is not included, due to \cref{assum:zero_mean}.}:%
\begin{equation} \label{eq:control_policy}
    u_k = K_k x_k
\end{equation}
where $K_k$ is the feedback gain matrix.
From standard derivations, the covariance propagation equation is given by
\begin{align} \label{eq:covariance_propagation_in_K}
    \Sigma_{k+1} &= (A_k + B_k K_k) \Sigma_k (A_k + B_k K_k)^\top + D_k D_k^\top
\end{align}
and the objective function by
\begin{equation}\label{eq:cost_function_in_K}
    J_{\Sigma} = \sum_{k=0}^{N-1} \tr(Q_k \Sigma_k) + \tr(R_k K_k \Sigma_k K_k^\top) .
\end{equation}
Then, \cref{prob:covariance_steering} is cast as a parameter optimization problem:
\begin{equation} \label{eq:covariance_steering_in_K}
    \min_{K_k,\Sigma_k} \ \cref{eq:cost_function_in_K} \quad 
    \text{s.t.} \ \cref{eq:covariance_propagation_in_K}, \ \Sigma_0 = \bar{\Sigma}_0, \ \Sigma_N \preceq \bar{\Sigma}_N
\end{equation}

Note that both the feedback gains $K_k$ and the covariance matrices $\Sigma_k$ are optimization variables,
making \cref{eq:covariance_steering_in_K} nonconvex due to the equality constraint \cref{eq:covariance_propagation_in_K}.
However, through a process called \textit{lossless convexification}, the solution to \cref{eq:covariance_steering_in_K} 
can be obtained by solving a convex optimization problem \cite{liuOptimalCovarianceSteering2024}.
Here, we review the solution method proposed in \cite{liuOptimalCovarianceSteering2024}.
We begin by performing a change of variables by defining
\begin{equation}
    U_k := K_k \Sigma_k .
\end{equation}
Note that $K_k$ can be recovered uniquely from $U_k$ as $K_k = U_k \Sigma_k^{-1}$ from \cref{assum:nondegenerate_covariance}.
With this change of variables, \cref{eq:covariance_propagation_in_K} and \cref{eq:cost_function_in_K} are respectively rewritten as
\begin{align} \label{eq:covariance_dynamics_Sigma_inv}
    \Sigma_{k+1} &= A_k \Sigma_k A_k^\top + A_k U_k^\top B_k^\top + \nonumber \\
    & \quad B_k U_k A_k^\top + B_k U_k \Sigma_k^{-1} U_k^\top B_k^\top + D_k D_k^\top \\ 
    J_{\Sigma} &= \sum_{k=0}^{N-1} \tr(Q_k \Sigma_k) + \tr(R_k U_k \Sigma_k^{-1} U_k^\top)     \label{eq:cost_Sigma_inv} .
\end{align}
Next, introduce variables 
\begin{equation} \label{eq:Y_definition}
    Y_k := U_k \Sigma_k^{-1} U_k^\top (= K_k \Sigma_k K_k^\top)
\end{equation}
and replace the relevant expressions in \cref{eq:covariance_dynamics_Sigma_inv,eq:cost_Sigma_inv}.
While \cref{eq:Y_definition} is nonconvex, it can be relaxed to a matrix inequality as
\begin{equation} \label{eq:Y_relaxation}
    Y_k \succeq U_k \Sigma_k^{-1} U_k^\top
\end{equation}
which, from Schur's Lemma, is equivalent to the linear matrix inequality 
\begin{equation}
    \begin{bmatrix}
        \Sigma_k & U_k ^\top \\
           U_k & Y_k
    \end{bmatrix} \succeq 0 .
\end{equation}
\cref{eq:covariance_steering_in_K} now takes the form of a semidefinite programming (SDP):%
\begin{subequations} \label{eq:covariance_steering_in_U}
\begin{align}
    &\min_{\Sigma_k, U_k, Y_k} \ J_{\Sigma} = \sum_{k=0}^{N-1} \tr(Q_k \Sigma_k) + \tr(R_k Y_k) \\
    &\quad \text{s.t.} \quad 
     \Sigma_0 = \bar{\Sigma}_0 , \quad
     \Sigma_N \preceq \bar{\Sigma}_N \label{eq:boundary_conditions_2}\\
    & \Sigma_{k+1} = A_k \Sigma_k A_k^\top + A_k U_k^\top B_k^\top + B_k U_k A_k^\top \nonumber \\
     & \quad + B_k Y_k B_k^\top + D_k D_k^\top, \quad k \in \Z_{0:N-1} \label{eq:covariance_dynamics_in_Y}\\
     & \begin{bmatrix}
         \Sigma_k & U_k ^\top \\
            U_k & Y_k
        \end{bmatrix} \succeq 0 , \quad k \in \Z_{0:N-1} \label{eq:psd_constraint}
\end{align}
\end{subequations}
\cite{liuOptimalCovarianceSteering2024} shows that the solution obtained 
from solving this convex problem (after changing back the variables to $K_k$) is
equivalent to the solution obtained from the original nonconvex problem \cref{eq:covariance_steering_in_K}.
Since the equality \cref{eq:Y_definition} is satisfied at the optimal solution of \cref{eq:covariance_steering_in_U}, 
$Y_k$ will be equivalent to the covariance matrix of the input $u_k$.

\subsection{Group Sparsity via Sum-of-Norms Regularization}
While $\ell_1$ regularization is useful for inducing element-wise sparsity,
it cannot be directly applied to induce sparsity in \textit{groups} of elements \cite{schmidtLectureNotesCPSC}.
In this case, the regularization term takes the general sum-of-norms form \cite{yuanModelSelectionEstimation2006}:
\begin{equation}
    \sum_{g \in G} \|x_g\|_p
\end{equation}
Here, $G$ is a set of groups, and $x_g$ is the vector of features in group $g$.
This regularization promotes that all coefficients in some of the groups are zero, i.e. it encourages
the use of only a few groups. Naturally, this is also called the 
$\ell_{1,p}$-norm and can be thought of as a generalization of the $\ell_1$-norm. 
There are several choices for $p$; $p=2$ is standard in group LASSO \cite{yuanModelSelectionEstimation2006},
$p=\infty$ is also sparsity-inducing. 
However, $p=1$ does not promote group sparsity \cite{schmidtLectureNotesCPSC}.

\subsection{Iteratively Reweighted $\ell_1$ Minimization}
Although the $\ell_1$-norm regularization is widely used for inducing sparsity, its solution can be suboptimal in terms of sparsity.
In the context of signal processing, \cite{candesEnhancingSparsityReweighted2008} proposes IRL1
 to better approximate the $\ell_0$-norm minimization, as in \cref{eq:IRL1}. By iteratively solving the weighted $\ell_1$-norm minimization problem
with weights updated based on the solution of the previous iteration, \cite{candesEnhancingSparsityReweighted2008} shows that
the solution recovers the original signal $x$ accurately. 
The weight update rule is given by
\begin{equation} \label{eq:weight_update_rule}
    w_i^{(l+1)} = \frac{1}{|x_i^{(l)}| + \epsilon}, \quad w_i^{(1)} = 1
\end{equation}
where $l$ is the iteration index, and $\epsilon$ is a small positive constant, 
used to avoid division by zero.
Intuitively, the weights are updated such that for elements of $x$ that are close to zero
in the previous iteration, the weights are large, promoting sparsity in the subsequent solution.
\cref{eq:weight_update_rule} is the first-order term in the linearization of the (nonconvex)
penalty function \cite{candesEnhancingSparsityReweighted2008}
\begin{equation}
    \phi_{\log,\epsilon}(t) = \log(1 + |t|/\epsilon)
\end{equation}
for $t = x_i^{(l)}$, which, when multipled by a constant $c = 1 / \phi_{\log,\epsilon}(1)$, approaches the discontinuous penalty function 
\begin{equation}
    \phi_0(t) = \begin{cases}
        1 & t \neq 0 \\
        0 & t = 0
    \end{cases}
\end{equation}
as $\epsilon \rightarrow 0$.
See \cref{fig:log_penalty_both} for a visualization.
\begin{figure}[htb]
    \centering
    \begin{subfigure}[c]{0.47\linewidth}
        \centering
        \includegraphics[width=\linewidth]{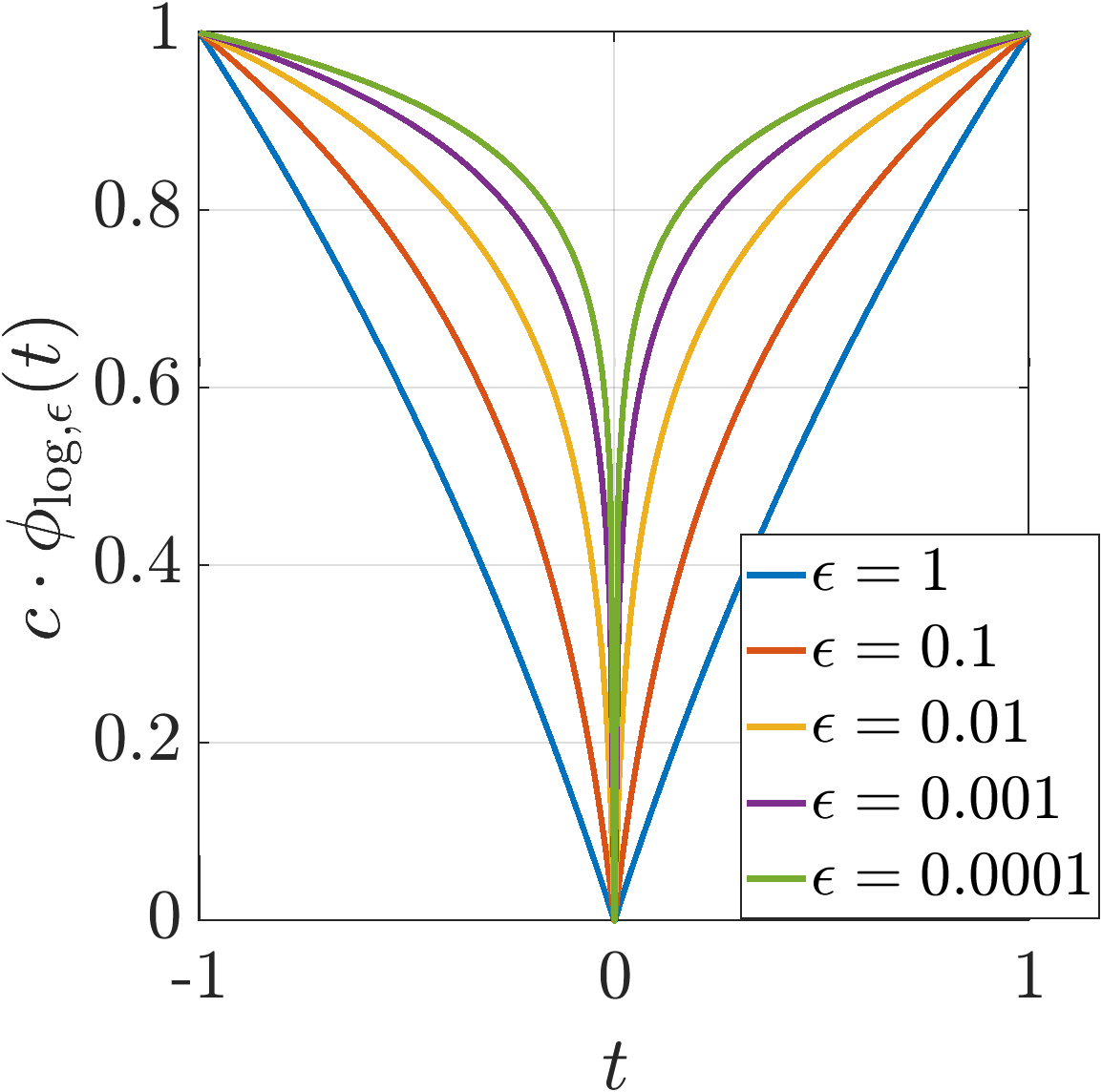}
    \end{subfigure}%
    \hfill
    \begin{subfigure}[c]{0.47\linewidth}
        \centering
        \includegraphics[width=\linewidth]{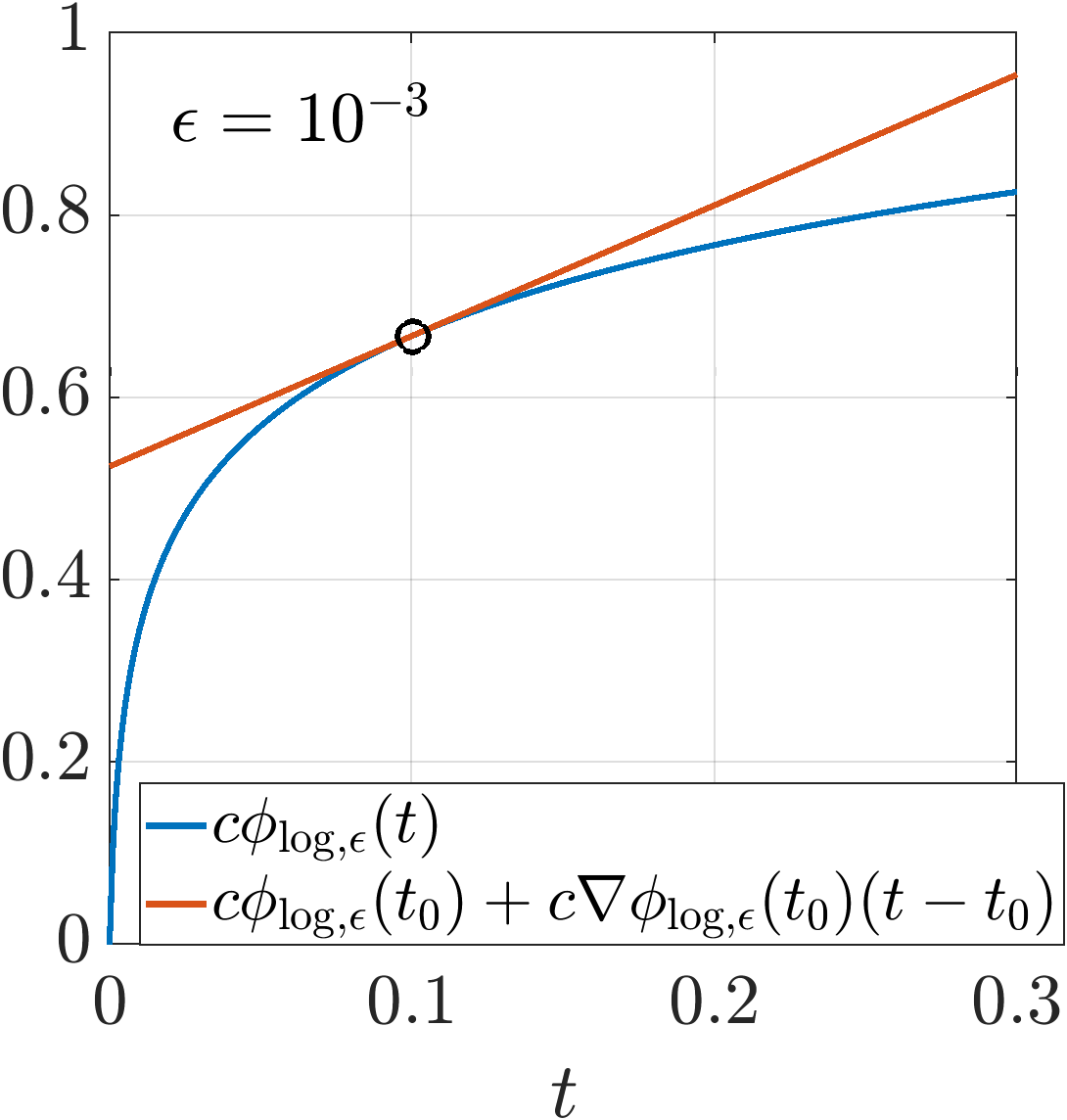}
    \end{subfigure}
    \caption{Left: $\phi_{\log,\epsilon}(t)$ approaches $\phi_0(t)$ as $\epsilon \rightarrow 0$.
    Right: For $t_0 > 0$, $\nabla \phi_{\log, \epsilon}(t_0) = 1 / (t_0 + \epsilon)$, which gives
    the weighted $\ell_1$ penalty, with its gradient becoming steeper as $t_0 \rightarrow 0$.}
    \label{fig:log_penalty_both}
\end{figure}

\section{Hands-Off Covariance Steering} \label{sec:sparse_covariance_feedback}

\subsection{Problem Formulation}
Define a binary vector $\tau \in \{0, 1\}^N$, where $\tau_k$ is such that
\begin{equation} \label{eq:tau_definition}
    \tau_k = \begin{cases}
        0 & \text{if } \P{u_k = 0} = 1 \\
        1 & \text{otherwise} .
    \end{cases}
\end{equation}
Hands-off covariance steering is a multi-objective optimization problem that trades off the transient cost and the sparsity of the feedback gains as follows:
\begin{equation} \label{eq:hands-off-covariance-steering}
    \min_{\pi, \tau} \quad [J_{\Sigma}, \ J_\tau := \sum_{k=0}^{N-1} \tau_k ]
    \quad \text{s.t.} \ \cref{eq:covariance_propagation_in_K}, \cref{eq:boundary_conditions}, \cref{eq:tau_definition}
\end{equation}
Our objective is to find, in a computationally tractable manner, an approximate Pareto front of the transient cost and sparsity.

When our control policy takes the form \cref{eq:control_policy}, we want 
the feedback gain matrices $(K_k)_{k=0}^{N-1}$ to be zero for many time intervals.
Since the feedback gains are related to the variables $(Y_k)_{k=0}^{N-1}$
through the relation \cref{eq:Y_relaxation}, we place a regularization term on $(Y_k)_{k=0}^{N-1}$.
We will show later that $Y_k = 0$ if and only if $K_k = 0$ from \cref{assum:nondegenerate_covariance}.

\subsection{Regularized Covariance Steering}
A natural first step to solve \cref{eq:hands-off-covariance-steering} is to add a regularization term to the objective function
and to vary the regularization parameter.
The regularized covariance steering problem is given by
\begin{equation} \label{eq:regularized_covariance_steering}
    \min_{\Sigma_k, U_k, Y_k} J_{\Sigma} + \lambda \sum_{k=0}^{N-1} \|Y_k\|_F 
    \text{ s.t. } \cref{eq:boundary_conditions_2}, \cref{eq:covariance_dynamics_in_Y}, \cref{eq:psd_constraint}
\end{equation}
where $\lambda > 0$ is the regularization parameter. 
Note that taking the Frobenius norm of $Y_k$ is equivalent to taking the $\ell_{2}$-norm of the vectorized form of $Y_k$.
Hence, this is a $\ell_{1,2}$ norm-regularized covariance steering problem.

However, as we demonstrate in \cref{sec:numerical_example}, solving the problem with this objective function does not always provide sparsity in the feedback gains.
As a method of inducing sparsity, we turn to the IRL1 method.

\subsection{IRL1P for Covariance Steering}
Sum-of-norms regularization and IRL1 are combined here as a way of achieving better sparsity in the solution.
We call this method the iteratively reweighted $\ell_{1,p}$-norm minimization (IRL1P).
The problem is given by
\begin{equation} \label{eq:IRL1p-covariance-steering}
    \hspace{-4pt}\min_{\Sigma_k, U_k, Y_k} J_{\Sigma} + \lambda \sum_{k=0}^{N-1} w_k^{(l)} \|Y_{k}\|_F 
    \ \text{ s.t. } \cref{eq:boundary_conditions_2}, \cref{eq:covariance_dynamics_in_Y},\cref{eq:psd_constraint} .
\end{equation}
The update rule for the weights is given by
\begin{equation}
    w_k^{(l+1)} = \frac{1}{\|Y_k^{(l)}\|_F + \epsilon}, \quad w_k^{(1)} = 1, \ k \in \Z_{0:N-1}
\end{equation}
and the algorithm terminates when 
\begin{equation}\label{eq:convergence_criterion}
    \frac{\sum_{k=0}^{N-1} (\norm{K_k}_F^{(l)} - \norm{K_k}_F^{(l-1)} )}{\sum_{k=0}^{N-1} \norm{K_k}_F^{(l-1)}} < \epsilon_{\mathrm{conv}}
\end{equation}
for some tolerance $\epsilon_{\mathrm{conv}} > 0$.
For completeness, the pseudocode is given in \cref{alg:IRL1-CS}.
\begin{algorithm}[thb]
\begin{algorithmic}[1]
\Require $\lambda, \epsilon, \epsilon_{\mathrm{conv}}, l_{\max}$
\State Initialize $w_k^{(1)} = 1, \ k \in \Z_{0:N-1}$
\While{$l < l_{\max}$}
\State Solve \cref{eq:IRL1p-covariance-steering} with $\lambda, w_k^{(l)}$ 
\State Retrieve feedback gains $K_k = U_k \Sigma_k^{-1}, \ k \in \Z_{0:N-1}$
\State If $l > 1$ and \cref{eq:convergence_criterion} are satisfied, \textbf{break}
\State Update weights $w_k^{(l+1)} = \frac{1}{\|Y_k^{(l)}\|_F + \epsilon}, \ k \in \Z_{0:N-1}$
\State $l \gets l+1$
\EndWhile
\State \textbf{return} $K_k, Y_k$ for $k \in \Z_{0:N-1}$, $\Sigma_k$ for $k \in \Z_{0:N}$
\end{algorithmic}
\caption{Iteratively Reweighted $\ell_{1,p}$-norm Minimization for Covariance Steering}
\label{alg:IRL1-CS}
\end{algorithm}

\begin{remark}
    The original IRL1 paper \cite{candesEnhancingSparsityReweighted2008} 
    provides no guarantee of convergence.
    A related work \cite{chenConvergenceReweightedL12014} shows that for problems of the form
    \begin{equation} \label{eq:IRL1-chen}
        \min_x \norm{Ax - b}_2^2 + \lambda \norm{x}_p^p, \quad 0 < p < 1
    \end{equation}
    replacing the regularization term with the weighted $\ell_1$-norm and applying the IRL1 procedure produces a 
    bounded sequence and any accumulation point is a stationary point of \cref{eq:IRL1-chen}. 
    At this point, we do not provide a convergence proof for \cref{alg:IRL1-CS}.
    However, as has been demonstrated in many other applications, the method is effective in
    both convergence and providing a sparse solution.
\end{remark}
\begin{assumption} \label{assum:slater_condition}
    Slater's condition holds for \cref{eq:IRL1p-covariance-steering}, i.e. there exists a strictly feasible solution for \cref{eq:IRL1p-covariance-steering}.
\end{assumption}
\subsection{Addition of Chance Constraints}
In addition to the standard formulation in \cref{eq:hands-off-covariance-steering}, we consider chance constraints on the control input's Euclidean norm:
\begin{equation} \label{eq:chance-constraint}
    \P{\norm{u_k}_2 \leq u_{\max}} \geq 1 - \gamma
\end{equation}
where $0 < \gamma < 1$ is a user-specified probability of violating the term inside $\P{\cdot}$.
The following lemma enables us to tractably reformulate \cref{eq:chance-constraint}
in a deterministic parameter optimization:
\begin{lemma}[Lemma 3, \cite{oguriChanceConstrainedControlSafe2024b}] \label{lemma:chance-constraint-upper-bound}
    For a Gaussian random vector $\xi \in \R^n$ with mean $\mu$ and covariance matrix $\Sigma$,
    the quantile function of $\norm{\xi}_2$, denoted by 
    $\mathcal{Q}_{\norm{\xi}_2}(p)$, is upper bounded as
    \begin{equation} \label{eq:chance-constraint-upper-bound}
        \mathcal{Q}_{\norm{\xi}_2}(p) \leq \norm{\mu}_2 + \sqrt{\mathcal{Q}_{\chi_n^2}(p) \lambda_{\max}(\Sigma)}
    \end{equation}
    where $\mathcal{Q}_{\chi_n^2}(p)$ is the $p$-quantile of the $\chi^2$ distribution with $n$ degrees of freedom.
\end{lemma}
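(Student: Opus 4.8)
The plan is to prove the bound through a coupling argument that reduces the quantile inequality to an almost-sure scalar inequality whose right-hand side has an explicitly computable quantile. First I would realize $\xi$ as $\xi = \mu + \Sigma^{1/2} z$ on a common probability space, where $\Sigma^{1/2}$ is the symmetric positive semidefinite square root of $\Sigma$ and $z \sim \normal(0, I_n)$; this has the correct law since $\E{\xi} = \mu$ and $\cov{\xi} = \Sigma^{1/2} I_n \Sigma^{1/2} = \Sigma$. On this space the triangle inequality gives $\norm{\xi}_2 \leq \norm{\mu}_2 + \norm{\Sigma^{1/2} z}_2$, while the Rayleigh-quotient bound $z^\top \Sigma z \leq \lambda_{\max}(\Sigma)\,\norm{z}_2^2$ gives $\norm{\Sigma^{1/2} z}_2 = \sqrt{z^\top \Sigma z} \leq \sqrt{\lambda_{\max}(\Sigma)}\,\norm{z}_2$. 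Combining these, the almost-sure inequality
\[
    \norm{\xi}_2 \;\leq\; \norm{\mu}_2 + \sqrt{\lambda_{\max}(\Sigma)}\,\norm{z}_2 \;=:\; Y
\]
holds pointwise.

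The step I expect to require the most care is transferring this pointwise bound to the quantile functions, as it hinges on the generalized-inverse definition of the quantile and the correct direction of the induced CDF inequality. Here $\norm{\xi}_2 \leq Y$ almost surely implies $F_{\norm{\xi}_2}(t) = \P{\norm{\xi}_2 \leq t} \geq \P{Y \leq t} = F_Y(t)$ for every $t$; that is, $\norm{\xi}_2$ is stochastically dominated by $Y$. Writing the quantile as $\mathcal{Q}_X(p) = \inf\{t : F_X(t) \geq p\}$ and using that a pointwise-larger CDF yields a pointwise-smaller generalized inverse, I would conclude $\mathcal{Q}_{\norm{\xi}_2}(p) \leq \mathcal{Q}_Y(p)$ for all $p \in (0,1)$.

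It then remains to evaluate $\mathcal{Q}_Y(p)$ in closed form. Since $Y$ is a strictly increasing affine function of $\norm{z}_2$, quantile equivariance under monotone maps gives $\mathcal{Q}_Y(p) = \norm{\mu}_2 + \sqrt{\lambda_{\max}(\Sigma)}\,\mathcal{Q}_{\norm{z}_2}(p)$. Finally, because $\norm{z}_2^2 \sim \chi_n^2$ and $t \mapsto \sqrt{t}$ is increasing, the same equivariance gives $\mathcal{Q}_{\norm{z}_2}(p) = \sqrt{\mathcal{Q}_{\chi_n^2}(p)}$; substituting and merging the square roots recovers the claimed bound $\norm{\mu}_2 + \sqrt{\mathcal{Q}_{\chi_n^2}(p)\,\lambda_{\max}(\Sigma)}$. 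Beyond elementary linear algebra, the only ingredients are the monotonicity of quantiles under stochastic dominance and their equivariance under monotone transformations, both standard, so I anticipate no substantive difficulty once the coupling is set up.
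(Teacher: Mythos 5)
Your proof is correct. Note that this lemma is stated in the paper without any proof --- it is imported verbatim as Lemma 3 of \cite{oguriChanceConstrainedControlSafe2024b} --- so there is no in-paper argument to compare yours against; your coupling route (realizing $\xi = \mu + \Sigma^{1/2} z$ with $z \sim \mathcal{N}(0, I_n)$, bounding $\norm{\xi}_2 \leq \norm{\mu}_2 + \sqrt{\lambda_{\max}(\Sigma)}\,\norm{z}_2$ pointwise via the triangle inequality and the Rayleigh quotient, then transferring to quantiles via stochastic dominance and equivariance under strictly increasing maps) is the standard derivation and is essentially what the cited reference does, with every delicate step (direction of the CDF inequality, generalized-inverse monotonicity, $\norm{z}_2^2 \sim \chi_n^2$) handled correctly. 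The only loose end is the degenerate case $\lambda_{\max}(\Sigma) = 0$, where your affine map of $\norm{z}_2$ is constant rather than strictly increasing; there the claim holds trivially since $\norm{\xi}_2 = \norm{\mu}_2$ almost surely, so the argument is complete once that case is noted separately.
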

From \cref{lemma:chance-constraint-upper-bound},
\cref{eq:chance-constraint} can be imposed as
\begin{equation}
    \E{u_k} + \sqrt{\mathcal{Q}_{\chi_m^2}(1-\gamma) \lambda_{\max}(\cov{u_k})} \leq u_{\max} .
\end{equation}
From \cref{assum:zero_mean}, $\E{u_k} = 0$ for $k \in \Z_{0:N-1}$. Moving terms around, 
\begin{equation} \label{eq:cc-reformulation-1}
    \lambda_{\max}(\cov{u_k}) \leq \frac{u_{\max}^2}{\mathcal{Q}_{\chi_n^2}(1-\gamma)} .
\end{equation}
Assuming that lossless convexification holds under chance constraints, i.e.
$Y_k = \cov{u_k}$, which we later prove in \cref{thm:lossless}, \cref{eq:cc-reformulation-1} can be rewritten as
\begin{equation} \label{eq:cc-reformulation-2}
    \lambda_{\max}(Y_k) \leq \rho := \frac{u_{\max}^2}{\mathcal{Q}_{\chi_n^2}(1-\gamma)}
\end{equation}
which can be readily included in \cref{eq:IRL1p-covariance-steering}, still as an SDP \cite{boydConvexOptimization2004}.
\begin{remark}
    Chance constraints are not restricted to this form or to be on the control input; 
    see e.g. \cite{oguriChanceConstrainedControlSafe2024b} for other
    deterministic reformulations of chance constraints, such as hyperplane constraints.
\end{remark}

\subsection{Lossless Convexification with Regularization}
Here, we show that the lossless convexification property shown in \cite{liuOptimalCovarianceSteering2024}
holds even with the regularization term in \cref{eq:IRL1p-covariance-steering}
and chance constraints in \cref{eq:cc-reformulation-2}.
Define $F_k, G_k, h_k$ as
\begin{align}
    F_k &:= A_k \Sigma_k A_k^\top + A_k U_k^\top B_k^\top 
    + B_k U_k A_k^\top \nonumber\\
    &\quad + B_k Y_k B_k^\top + D_k D_k^\top - \Sigma_{k+1} \\
    G_k &:= U_k \Sigma_k^{-1} U_k^\top - Y_k \\
    h_k &:= \lambda_{\max}(Y_k) - \rho .
\end{align}
Eq. \cref{eq:IRL1p-covariance-steering}, without boundary conditions, is now written as
\begin{subequations}
\begin{align}
    \min_{\Sigma_k, U_k, Y_k} &\sum_{k=0}^{N-1} \tr(Q_k \Sigma_k) + \tr(R_k Y_k) + \lambda w_k \|Y_k\|_F \\
    \text{s.t.} \ &F_k = 0, \ G_k \preceq 0, \ h_k \leq 0 .
\end{align}
\end{subequations}

The following lemma is used in the proof.
\begin{lemma}[Lemma 1, \cite{liuOptimalCovarianceSteering2024}] \label{lemma:AB}
    Let $A$ and $B$ be $n \times n$ symmetric matrices with $A \succeq 0$, $B \preceq 0$, and 
    $\tr(AB) = 0$. If $B$ has at least one nonzero eigenvalue, then $A$ is singular.
\end{lemma}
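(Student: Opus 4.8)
The plan is to exploit the positive semidefinite square root of $A$ together with the cyclic invariance of the trace, which turns the scalar hypothesis $\tr(AB)=0$ into the much stronger matrix identity that a single negative semidefinite matrix is actually zero. From there the singularity of $A$ follows by a short contradiction argument.

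First I would introduce the symmetric square root $A^{1/2} \succeq 0$, which exists since $A \succeq 0$ and satisfies $A = A^{1/2} A^{1/2}$. Using the cyclic property of the trace, I would rewrite the hypothesis as
\begin{equation}
    0 = \tr(AB) = \tr(A^{1/2} A^{1/2} B) = \tr(A^{1/2} B A^{1/2}).
\end{equation}
Defining $C := A^{1/2} B A^{1/2}$, I note that $C$ is symmetric and, because $A^{1/2}$ is symmetric and $B \preceq 0$, it is negative semidefinite by congruence: $C = (A^{1/2})^\top B A^{1/2} \preceq 0$. The next step is to upgrade $\tr(C)=0$ to $C=0$: since $C$ is symmetric and negative semidefinite, all of its eigenvalues are nonpositive, while $\tr(C)=0$ forces their sum to vanish, so every eigenvalue must be zero. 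A symmetric matrix with only zero eigenvalues is the zero matrix, hence $A^{1/2} B A^{1/2} = 0$.

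Finally, I would conclude by contradiction. Suppose $A$ were nonsingular; since $A \succeq 0$, nonsingularity upgrades to $A \succ 0$, so $A^{1/2}$ is invertible. Multiplying $A^{1/2} B A^{1/2} = 0$ on the left and right by $A^{-1/2}$ would then give $B = 0$, contradicting the assumption that $B$ has at least one nonzero eigenvalue. Therefore $A$ must be singular.

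This argument is short, so there is no deep obstacle; the only points requiring care are the two facts that do the real work. The first is the congruence/trace step establishing $C=0$, which relies on combining negative semidefiniteness with a vanishing trace rather than on either property alone. The second is the observation in the contradiction that a nonsingular positive semidefinite matrix is positive definite and hence has an invertible square root, which is exactly what lets us cancel $A^{1/2}$ and isolate $B$. The genuine insight is simply the choice of the square-root-plus-cyclic-trace device, which converts an inner-product-type condition into a pointwise matrix identity.
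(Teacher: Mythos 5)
Your proof is correct. One structural point first: this paper does not actually prove the lemma---it is imported, statement and proof, from \cite{liuOptimalCovarianceSteering2024} (their Lemma~1), so there is no in-paper argument to compare yours against; what you have written is a valid self-contained substitute. Checking your steps: $C := A^{1/2} B A^{1/2}$ is symmetric and negative semidefinite by congruence, cyclicity of the trace gives $\tr(C) = \tr(AB) = 0$, a negative semidefinite matrix with zero trace has all eigenvalues zero and hence vanishes, and nonsingularity of $A$ (which for $A \succeq 0$ upgrades to $A \succ 0$) would let you cancel the square roots and force $B = 0$, contradicting the nonzero-eigenvalue hypothesis. All of this is sound. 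For comparison, a common variant of this argument avoids the contradiction and produces an explicit null vector of $A$: writing $B = \sum_i \mu_i v_i v_i^\top$ with $\mu_i \leq 0$, the hypothesis $0 = \tr(AB) = \sum_i \mu_i \, v_i^\top A v_i$ is a sum of nonpositive terms, so each term vanishes; choosing an index $j$ with $\mu_j \neq 0$ forces $v_j^\top A v_j = 0$, and for $A \succeq 0$ this implies $A v_j = 0$ since $v_j^\top A v_j = \norm{A^{1/2} v_j}_2^2$. Both routes rest on the same mechanism---semidefiniteness plus a vanishing trace forces degeneracy---but the eigendecomposition version is constructive, while yours reaches singularity by contradiction; either is acceptable here.
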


\begin{theorem} \label{thm:lossless}
    % Let Slater's condition \cite{boydConvexOptimization2004} hold for \cref{eq:IRL1p-covariance-steering}.
    At the optimal solution of \cref{eq:IRL1p-covariance-steering}, $G_k = 0$ for $k \in \Z_{0:N-1}$, hence the relaxation is lossless.
\end{theorem}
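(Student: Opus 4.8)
The plan is to exploit strong duality and the KKT system, and then to combine the stationarity conditions with respect to $U_k$ and $Y_k$ so as to prove that the multiplier of the relaxed constraint $G_k \preceq 0$ is nonsingular; nonsingularity of that multiplier forces $G_k = 0$ through \cref{lemma:AB}. The problem is convex (the map $U_k \Sigma_k^{-1} U_k^\top$ is jointly convex, so $G_k \preceq 0$ defines a convex set), and by \cref{assum:slater_condition} Slater's condition holds, so strong duality holds and every optimal primal point satisfies the KKT conditions together with some dual variables. I would introduce a symmetric multiplier $M_k$ for the dynamics equality $F_k = 0$, a multiplier $\Lambda_k \succeq 0$ for $G_k \preceq 0$ (so the Lagrangian contains $\tr(\Lambda_k G_k)$), and a scalar $\theta_k \geq 0$ for the chance constraint $h_k \leq 0$.

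Before differentiating, I would dispose of the degenerate case $Y_k = 0$: the constraint $G_k \preceq 0$ then reads $U_k \Sigma_k^{-1} U_k^\top \preceq 0$, and since $\Sigma_k \succ 0$ by \cref{assum:nondegenerate_covariance}, this forces $U_k = 0$ and hence $G_k = 0$ immediately. So the interesting case is $Y_k \neq 0$, where $\norm{Y_k}_F$ is differentiable with gradient $Y_k / \norm{Y_k}_F$, and ordinary (sub)gradient stationarity applies.

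The core computation is two stationarity conditions. Differentiating the Lagrangian in $U_k$ gives $B_k^\top M_k A_k = -\Lambda_k U_k \Sigma_k^{-1}$, and since $A_k$ is invertible by \cref{assum:invertible_state_transition}, this yields $B_k^\top M_k B_k = -\Lambda_k U_k \Sigma_k^{-1} A_k^{-1} B_k$. Differentiating in $Y_k$ gives $\Lambda_k = R_k + B_k^\top M_k B_k + P_k$, where $P_k := \lambda w_k Y_k / \norm{Y_k}_F + \theta_k V_k$ collects the new terms, with $V_k \in \partial \lambda_{\max}(Y_k)$. The crucial observation, and the only place the added regularizer and the chance constraint enter, is that $P_k \succeq 0$: indeed $Y_k \succeq U_k \Sigma_k^{-1} U_k^\top \succeq 0$ so $Y_k / \norm{Y_k}_F \succeq 0$, while any subgradient of $\lambda_{\max}$ is positive semidefinite and $\theta_k \geq 0$. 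Substituting the first relation into the second and rearranging gives $\Lambda_k (I + U_k \Sigma_k^{-1} A_k^{-1} B_k) = R_k + P_k \succ 0$, because $R_k \succ 0$ and $P_k \succeq 0$. The right-hand side is positive definite, hence nonsingular, so the left factor $\Lambda_k$ must be nonsingular as well.

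It then remains to combine nonsingularity with complementary slackness $\tr(\Lambda_k G_k) = 0$: applying \cref{lemma:AB} with $A = \Lambda_k \succeq 0$ and $B = G_k \preceq 0$, if $G_k$ had any nonzero eigenvalue then $\Lambda_k$ would have to be singular, a contradiction, so $G_k = 0$ for every $k$ and the relaxation is lossless. I expect the main obstacle to be conceptual rather than computational: one should resist trying to assign a sign to the dynamics multiplier $M_k$ (which does not have a definite sign), and instead eliminate $M_k$ by coupling the $U_k$- and $Y_k$-stationarity conditions, after which positive definiteness collapses to the two facts $R_k \succ 0$ and $P_k \succeq 0$, the latter being exactly what guarantees that the regularized and chance-constrained problem remains lossless. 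The remaining care is routine matrix-calculus bookkeeping for the trace derivatives and a clean statement of subgradient KKT for the nonsmooth terms.
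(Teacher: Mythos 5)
Your proof is correct and follows essentially the same route as the paper's: form the Lagrangian with multipliers for $F_k$, $G_k$, $h_k$, eliminate the dynamics multiplier by combining the $U_k$- and $Y_k$-stationarity conditions, and use complementary slackness together with \cref{lemma:AB} to conclude that nonsingularity of the multiplier on $G_k \preceq 0$ forces $G_k = 0$. The only differences are cosmetic refinements: you deduce nonsingularity directly from $\Lambda_k\left(I + U_k \Sigma_k^{-1} A_k^{-1} B_k\right) = R_k + P_k \succ 0$ rather than via the paper's determinant-based contradiction, and you explicitly treat the nonsmooth points (the case $Y_k = 0$, and a general subgradient of $\lambda_{\max}$ instead of $v_k v_k^\top$) that the paper's proof glosses over.
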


\begin{proof}

Let $\Lambda_k$, $\Gamma_k$, $\eta_k$ be the Lagrange multipliers for $F_k$, $G_k$, $h_k$ respectively.
$\Gamma_k$ is symmetric by definition, and $\Lambda_k$ is symmetric because of the symmetry in $F_k$.
\cref{assum:slater_condition} (Slater's condition) implies strong duality. Then, from strong duality, the Karush-Kuhn-Tucker (KKT) conditions are necessary (and sufficient) for optimality \cite{boydConvexOptimization2004}.
The Lagrangian is given by
\begin{align}
    &\mathcal{L}(\Sigma_k, U_k, Y_k, \Lambda_k, \Gamma_k) = 
    \sum_{k=0}^{N-1} \tr(Q_k \Sigma_k) + \tr(R_k Y_k) \nonumber \\
    &  \quad + \lambda w_k \|Y_k\|_F + \tr(\Lambda_k F_k) + \tr(\Gamma_k G_k) + \eta_k h_k .
\end{align}
The relevant KKT conditions are given by
\begin{subequations}
\begin{align}
    % &\frac{\partial \mathcal{L}}{\partial \Sigma_k} 
    % = Q_k + A_k^\top \Lambda_k A_k - \Sigma_k^{-1}U_k^\top \Gamma_k U_k \Sigma_k^{-1} - \Lambda_{k+1} = 0 \label{eq:dLdSigma} \\
    &\frac{\partial \mathcal{L}}{\partial U_k} 
    = 2 \Gamma_k U_k \Sigma_k^{-1} + 2 B_k^\top \Lambda_k A_k = 0 \label{eq:dLdU} \\
    &\frac{\partial \mathcal{L}}{\partial Y_k} 
    = R_k +  \frac{\lambda w_k Y_k}{\|Y_k\|_F} - \Gamma_k + \eta_k v_k v_k^\top + B_k^\top \Lambda_k B_k = 0 \label{eq:dLdY} \\
    &F_k = 0, \ G_k \preceq 0, \ h_k \leq  0\quad \text{(primary feasibility)} \\ 
    & \Gamma_k \succeq 0, \eta_k \geq 0  \quad \text{(dual feasibility)} \label{eq:dual_feasibility}  \\
    & \tr(\Gamma_k^\top G_k) = \eta_k h_k = 0 \ \ \text{(complementary slackness)} 
\end{align}
\end{subequations}
where $v_k$ is the eigenvector corresponding to the largest eigenvalue of $Y_k$.
From \cref{eq:dLdU}, we have
\begin{equation}
    B_k^\top \Lambda_k = - \Gamma_k U_k \Sigma_k^{-1} A_k^{-1} .
\end{equation}
Substituting into \cref{eq:dLdY}, we have
\begin{equation} 
    R_k +  \frac{\lambda w_k Y_k}{\|Y_k\|_F}  - \Gamma_k + \eta_k v_k v_k^\top - \Gamma_k U_k \Sigma_k^{-1}  A_k^{-1} B_k = 0 \label{eq:dLdY_substituted}
\end{equation}

Now, we are ready to show the theorem statement using proof by contradiction.
Assume that $G_k$ has at least one nonzero eigenvalue.
From $G_k \preceq 0$, symmetry of $\Gamma_k$, and $\tr(\Gamma_k^\top G_k) = 0$, applying \cref{lemma:AB} gives that $\Gamma_k$ is singular. 
Then, moving terms in \cref{eq:dLdY_substituted} and taking the determinant of both sides,
\begin{align}
&\det(R_k +  \frac{\lambda w_k Y_k}{\|Y_k\|_F} + \eta_k v_k v_k^\top)  \nonumber\\
&\quad = \det(\Gamma_k) \det(I + \Gamma_k U_k \Sigma_k^{-1}  A_k^{-1} B_k) .
\end{align}
The LHS is positive, since $R_k \succ 0, \lambda \geq 0, Y_k \succeq 0$ by assumption and $\eta_k \geq 0$ from \cref{eq:dual_feasibility}.
However, since $\Gamma_k$ is singular, its determinant, and hence the RHS is zero. Thus, we have a contradiction.
\end{proof}

\begin{remark}
    From \cref{assum:nondegenerate_covariance}, 
    $Y_k = K_k \Sigma_k K_k^\top = 0$ implies $K_k = 0$. Then, $K_k = 0 \iff Y_k = 0$.
    With \cref{thm:lossless}, this justifies the use of the regularization on $Y_k$ in \cref{eq:IRL1p-covariance-steering}.
\end{remark}

\section{Numerical Example} \label{sec:numerical_example}
Here, we show the solution obtained from hands-off covariance steering. 
SDPs are solved using YALMIP \cite{lofbergYALMIPToolboxModeling2004} and MOSEK \cite{mosekapsMOSEKOptimizationToolbox2023}.
We use some problem parameters from \cite{liuOptimalCovarianceSteering2024}: for $k \in \Z_{0:N-1}$, $Q_k = 0.5 I_2$, $R_k = 1$,
\begin{equation} \nonumber
    A_k = \begin{bmatrix}
        1 & 0.2 \\ 0 & 1
    \end{bmatrix}, \ B_k = \begin{bmatrix}
        0.02 \\ 0.2
    \end{bmatrix}, \ D_k = \begin{bmatrix}
        0.4 & 0 \\ 0.4 & 0.6
    \end{bmatrix} .
\end{equation}
We set the boundary covariance matrices as
\begin{equation} \nonumber
    \bar{\Sigma}_0 = \begin{bmatrix}
        5 & -1 \\ -1 & 1
    \end{bmatrix}, \quad 
    \bar{\Sigma}_N = \begin{bmatrix}
        0.5 & -0.4 \\ -0.4 & 2
    \end{bmatrix} .
\end{equation}
The chance constraint is defined by $u_{\max} = 10$, $\gamma = 0.03$.
We also note that for every problem solved, the lossless convexification
was verified to hold (to some numerical tolerance). 
Simulations were performed on a standard laptop with an Intel i7 processor and 16GB of RAM.

\subsection{Comparison with Brute-Force Search}
To first demonstrate the capability of the IRL1P method 
to find a sparse solution that closely matches the global optimum, 
we compare the solution obtained from the method
with the solution obtained from a brute-force search.
The brute-force search is performed by solving $2^N$ convex optimization problems, 
where each problem imposes $Y_k = 0$ for a set of nodes $k$.
The solution with the smallest objective function value is chosen.
To keep the search space small, we consider the problem with $N = 8$.
Furthermore, to emphasize the effectiveness for a wide range of sparsity, we exclude the 
chance constraints from the problem.

\begin{figure}[hbt]
    \centering
    \includegraphics[width=0.6\linewidth]{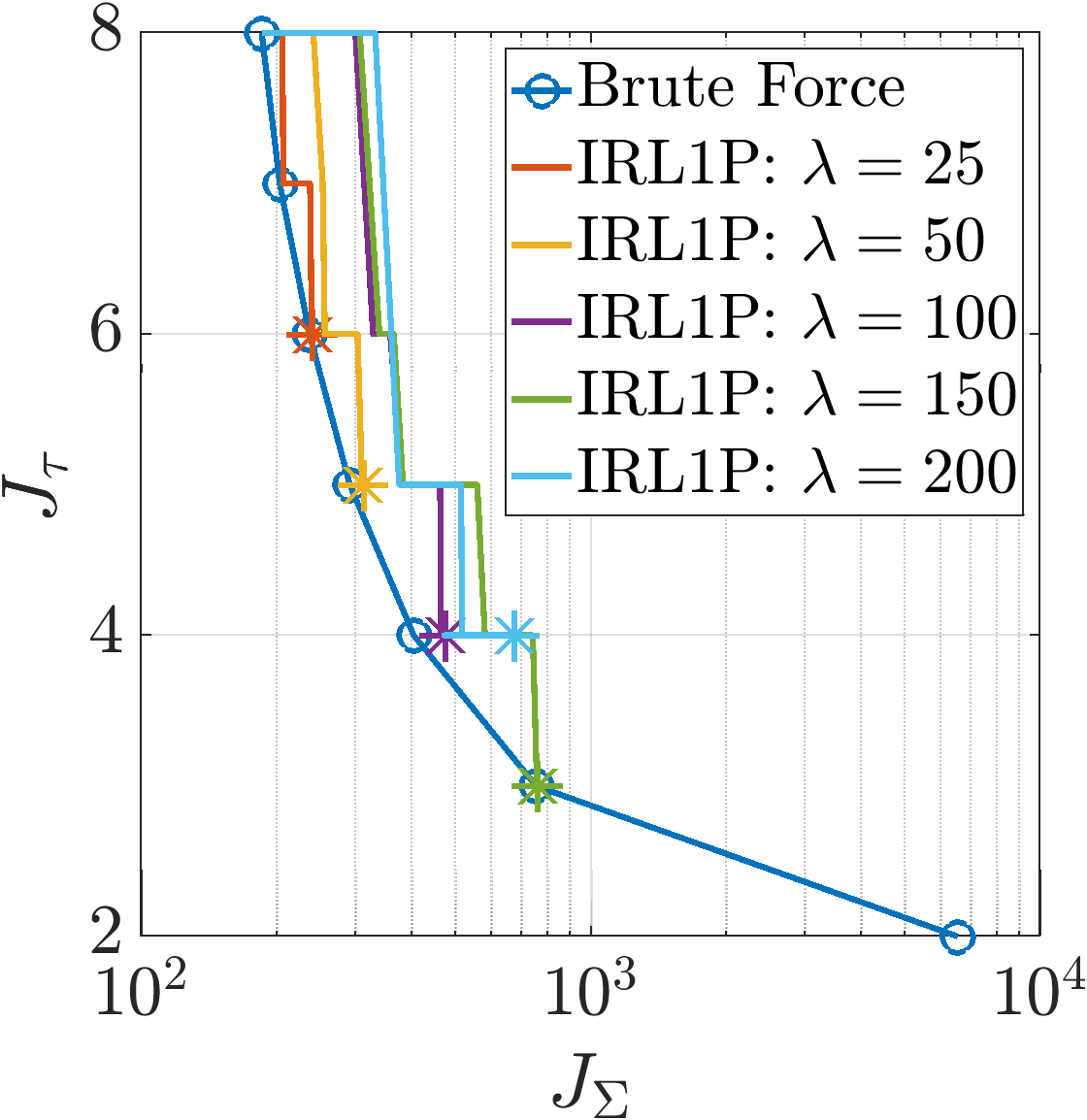}
    \caption{Comparison with brute-force search Pareto front. $*$ shows the final solution from IRL1P.}
    \label{fig:brute_force_vs_IRL1}
\end{figure}

\cref{fig:brute_force_vs_IRL1} shows the comparison.
Overall, increasing $\lambda$ leads to a more sparse solution, i.e. smaller $J_\tau$.
The final solution shows convergence to a pair $(J_{\Sigma}, J_\tau)$ that is close to the Pareto front obtained from the brute-force search.
For example, using $\lambda = 25$ converges to a solution with $J_{\tau} = 6$
that is close to brute-force.
The same can be said for $\lambda = 50$ to $J_{\tau} = 5$, 
$\lambda = 100$ to $J_{\tau} = 4$, and $\lambda = 150$ to $J_{\tau} = 3$.
However, the solution from IRL1P fails to find a solution with the desired sparsity for $J_{\tau} = 2$, 
most likely because this solution requires a large transient cost, and as a result many iterations.
Nevertheless, the solution from IRL1P finds a near-optimal solution with a much smaller computational cost, and its
effectiveness scales to problems with larger $N$. 
Brute-force search is not feasible for larger $N$ due to the exponential growth of the search space, 
while the computational complexity of IRL1P is proportional to the computational
complexity of solving the SDP. The computational effort of the SDP has been demonstrated to grow approximately linearly
with $N$ up to problem sizes of $N$ in the hundreds \cite{rapakouliasDiscreteTimeOptimalCovariance2023a}.
Cases where $J_\tau < 2$ were infeasible for this problem setting.
Since throughout the iteration progress, the solution from IRL1P is close to the Pareto front, this also
suggests that the user can also choose to terminate the algorithm when the desired sparsity is achieved, 
if they are to accept some suboptimality.

\subsection{Standard Covariance Steering}
The analysis from here on is performed with $N=29$ and the chance constraints.
The covariance evolution without feedback control and the covariance evolution
from solving \cref{eq:covariance_steering_in_U} are shown in \cref{fig:covariances_no_control}
and \cref{fig:covariances_original}, respectively.
In the standard covariance steering case, the final covariance constraint is active, i.e. $\Sigma_N = \bar{\Sigma}_N$.
\begin{figure}[htb]
    \centering
    \begin{subfigure}[c]{0.5\linewidth}
        \centering
        \includegraphics[width=\linewidth]{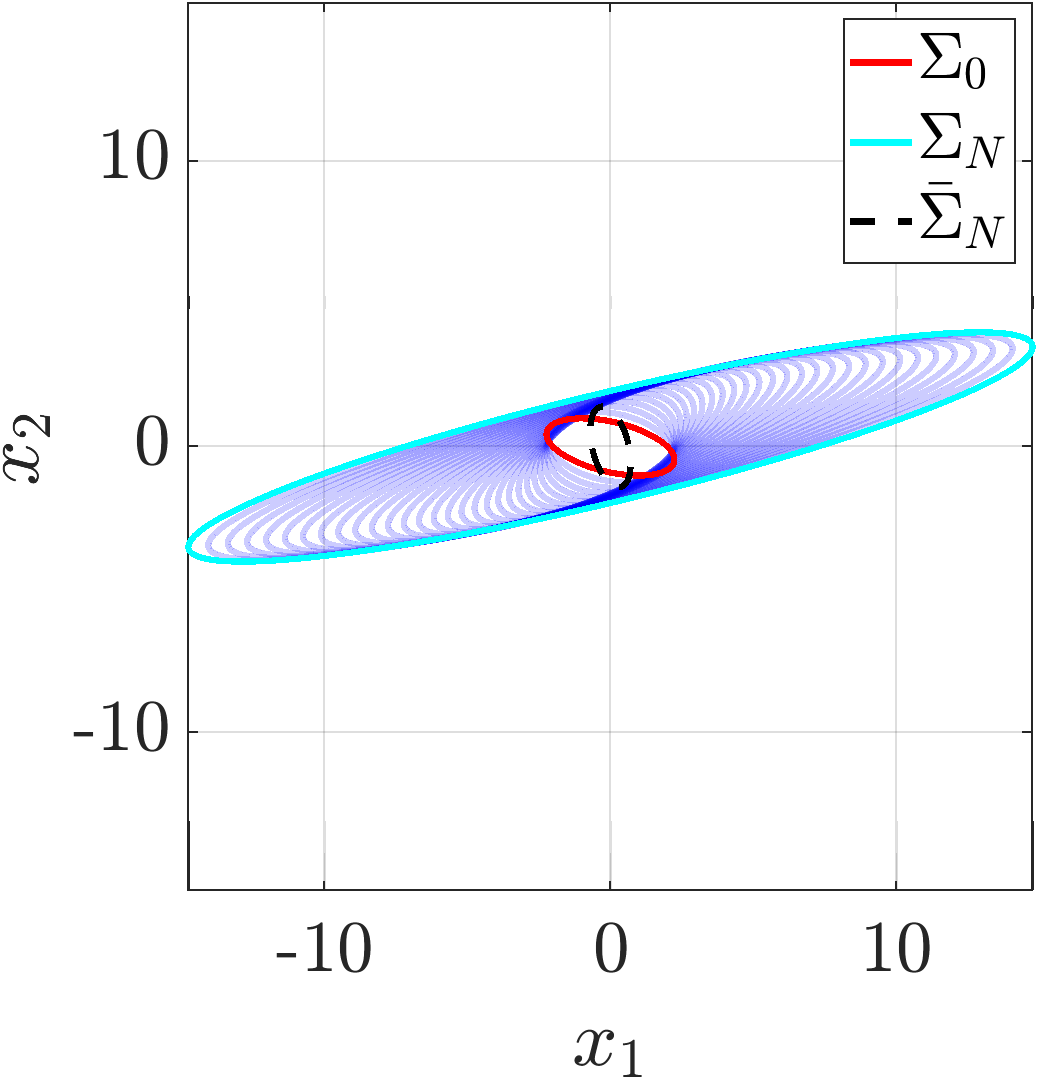}
        \caption{Without feedback control}
        \label{fig:covariances_no_control}
    \end{subfigure}%
    \begin{subfigure}[c]{0.5\linewidth}
        \centering
        \includegraphics[width=\linewidth]{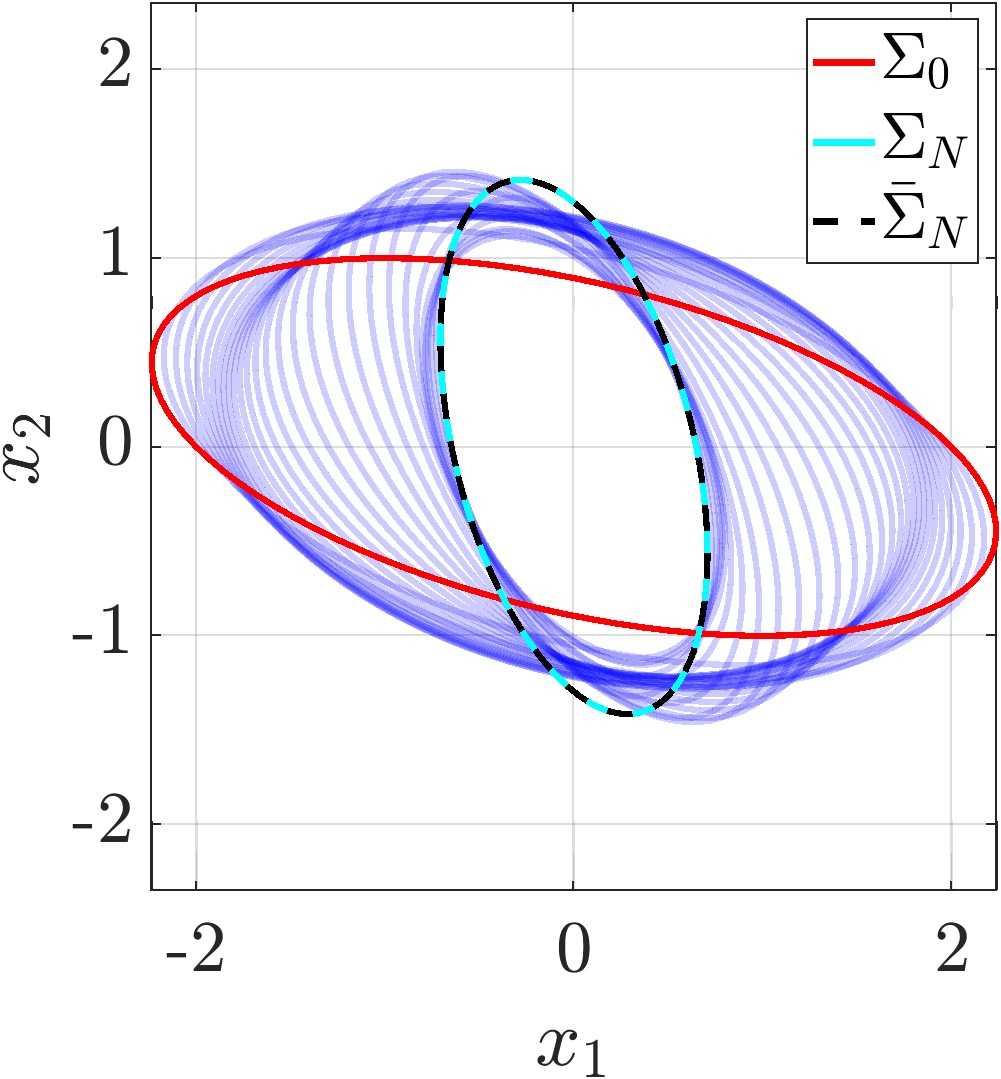}
        \caption{Standard covariance steering}
        \label{fig:covariances_original}
    \end{subfigure}
    \caption{1-sigma ellipses of the covariance evolution}
\end{figure}

\subsection{Iteratively Reweighted $\ell_{1,p}$ Regularization}
Next, we apply the proposed method, i.e. \cref{alg:IRL1-CS}. 
We use the parameters $\epsilon = 10^{-3}$, $l_{\max} = 50$.
\cref{fig:Y_lambda_max} shows the value of $\lambda_{\max}(Y_k)$ at each iteration, 
for cases with/without chance constraints, when using $\lambda = 1000$.
For the case with chance constraints, MOSEK's solution time reported by YALMIP was 0.013 seconds per iteration on average, 
and the algorithm terminated after 25 iterations. (Note that YALMIP's overhead is much larger, 
and including this, it took 0.40 seconds on average.)
\begin{figure}[htb]
    \centering
    \begin{subfigure}[c]{0.5\linewidth}
        \centering
        \includegraphics[width=\linewidth]{p2_lambda1000_Y_lambda_max_norm.png}
        \caption{with chance constraints}
        \label{fig:Y_lambda_max_constrained}
    \end{subfigure}%
    \begin{subfigure}[c]{0.5\linewidth}
        \centering
        \includegraphics[width=\linewidth]{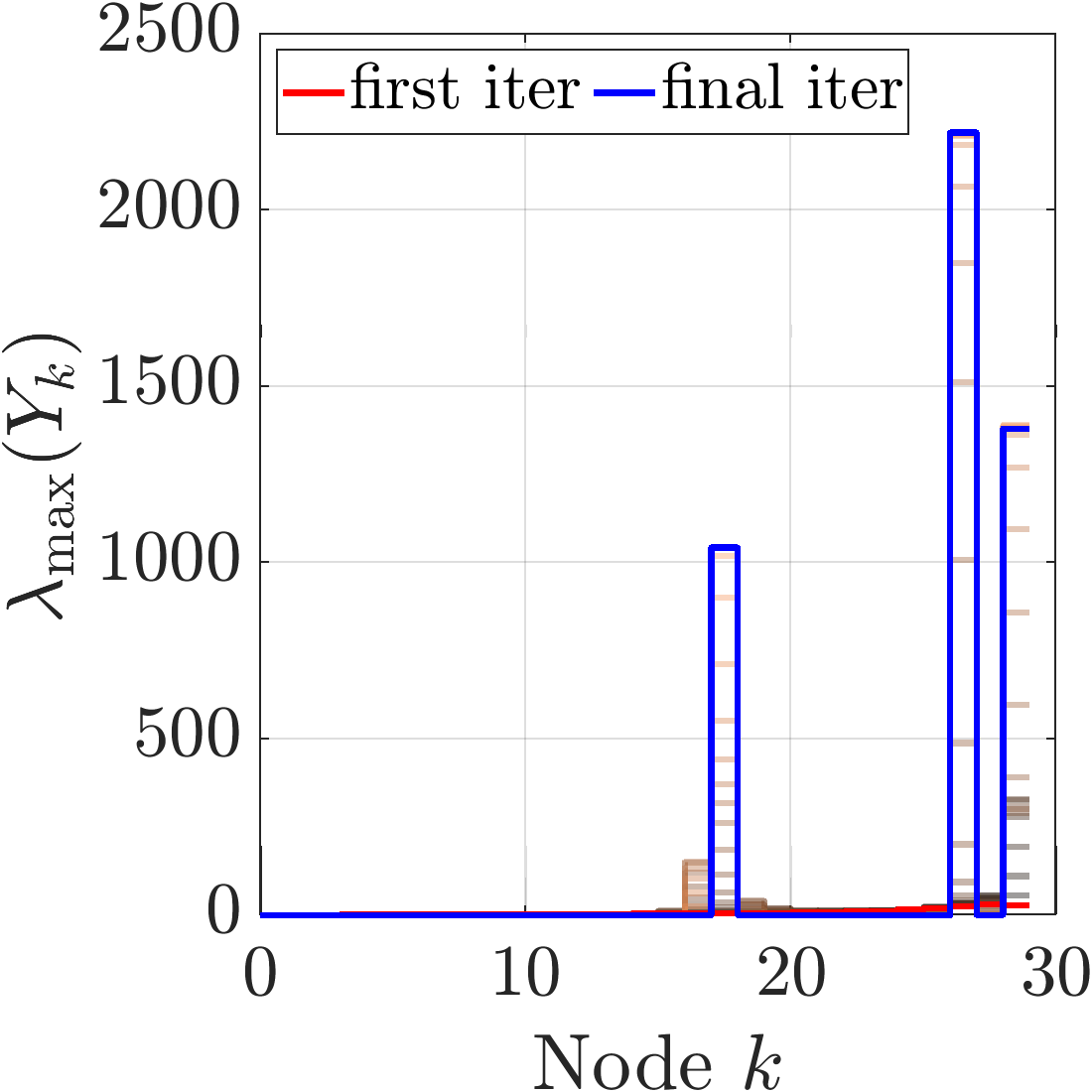}
        \caption{w/o chance constraints}
        \label{fig:Y_lambda_max_unconstrained}
    \end{subfigure}%
    \caption{Control covariance max eigenvalue at each iteration for $\lambda = 1000$. Darker colors correspond to 
    earlier iterations.}
    \label{fig:Y_lambda_max}
\end{figure}

Focusing first on \cref{fig:Y_lambda_max_constrained}, we can see that as the iteration progresses,
more nodes have $\lambda_{\max}(Y_k) \approx 0$, i.e. the feedback gain matrices are zero.
We can also see the effect of the chance constraints, as the maximum eigenvalue of $Y_k$ is bounded.
We remark that the converged solution shows bang-bang profile for $\lambda_{\max}(Y_k)$. 
This suggests that the equivalence of $\ell_1$-optimal control and sparse control shown in \cite{nagaharaMaximumHandsOffControl2016} 
extends to the control of the state covariance when the input covariance is bounded.
Without chance constraints, the algorithm converges to a solution 
that uses only 3 statistically large inputs, as shown in \cref{fig:Y_lambda_max_unconstrained}.
Since the chance constraints are not present, these nodes have much larger values of $\lambda_{\max}(Y_k)$ compared to the constrained case.
\begin{figure}[htb]
    \centering
    \includegraphics[width=0.5\linewidth]{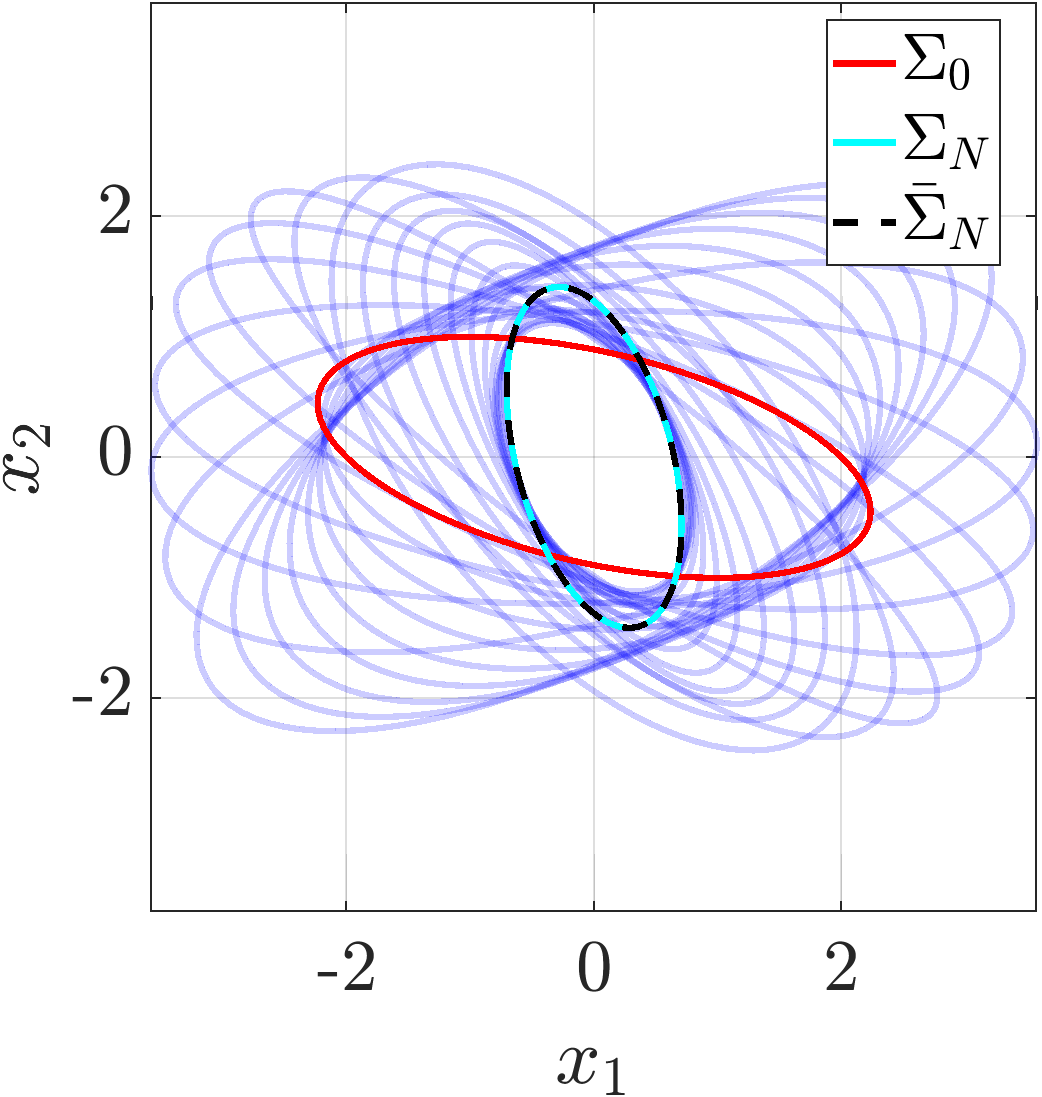}
    \caption{Covariance 1-sigma evolution for IRL1P, $\lambda = 1000$}
    \label{fig:covariance_IRL1}
\end{figure}

\cref{fig:covariance_IRL1} shows the state covariance evolution. 
While the covariance ellipse becomes larger compared to the standard covariance steering solution in \cref{fig:covariances_original}, the final covariance is steered to match the target.

\begin{figure}[htb]
    \centering
    \begin{subfigure}[c]{0.41\linewidth}
        \centering
        \includegraphics[width=\linewidth]{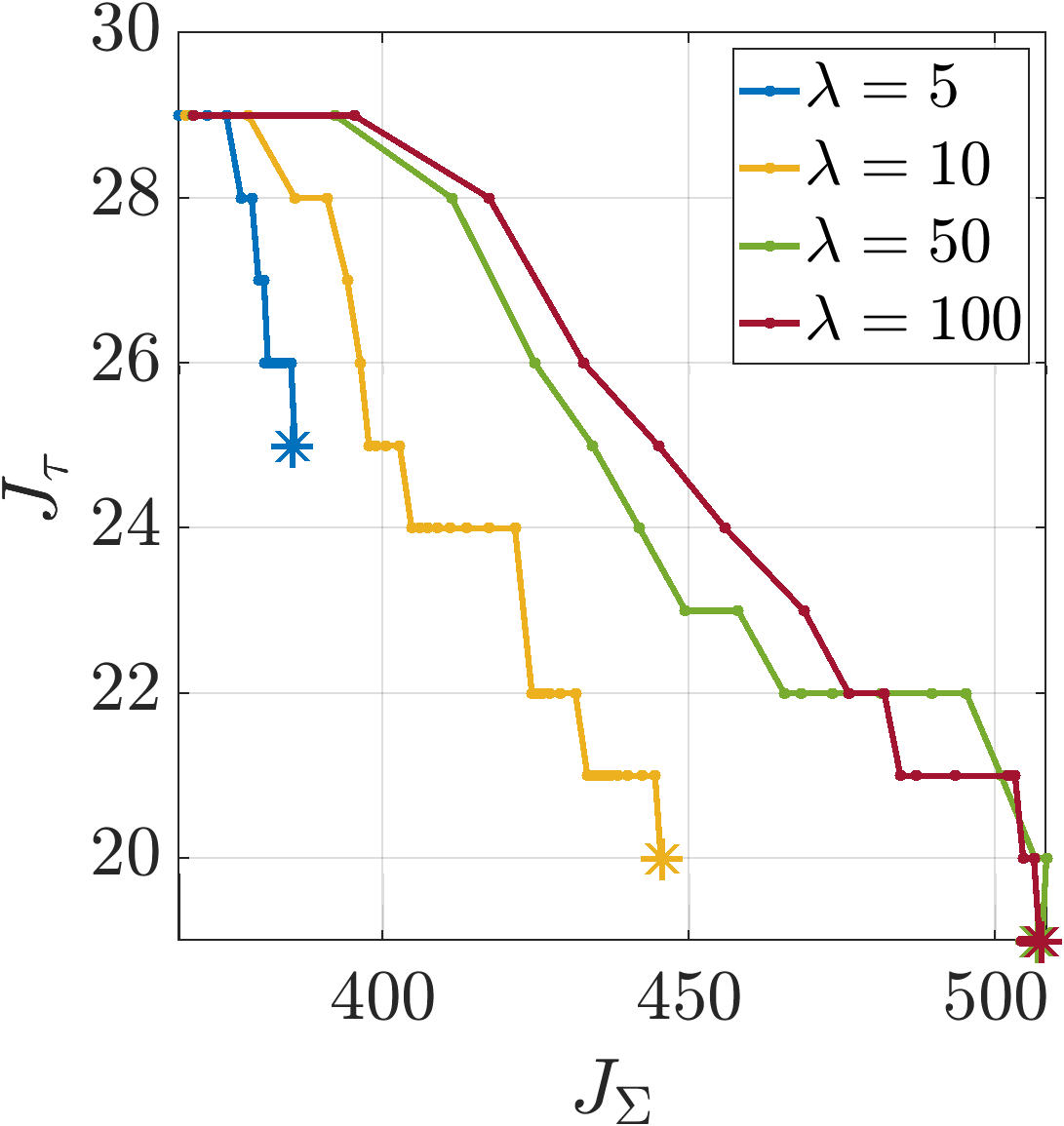}
        \caption{}
        \label{fig:lambda_variations}   
    \end{subfigure}%
    \begin{subfigure}[c]{0.59\linewidth}
        \centering
        \includegraphics[width=\linewidth]{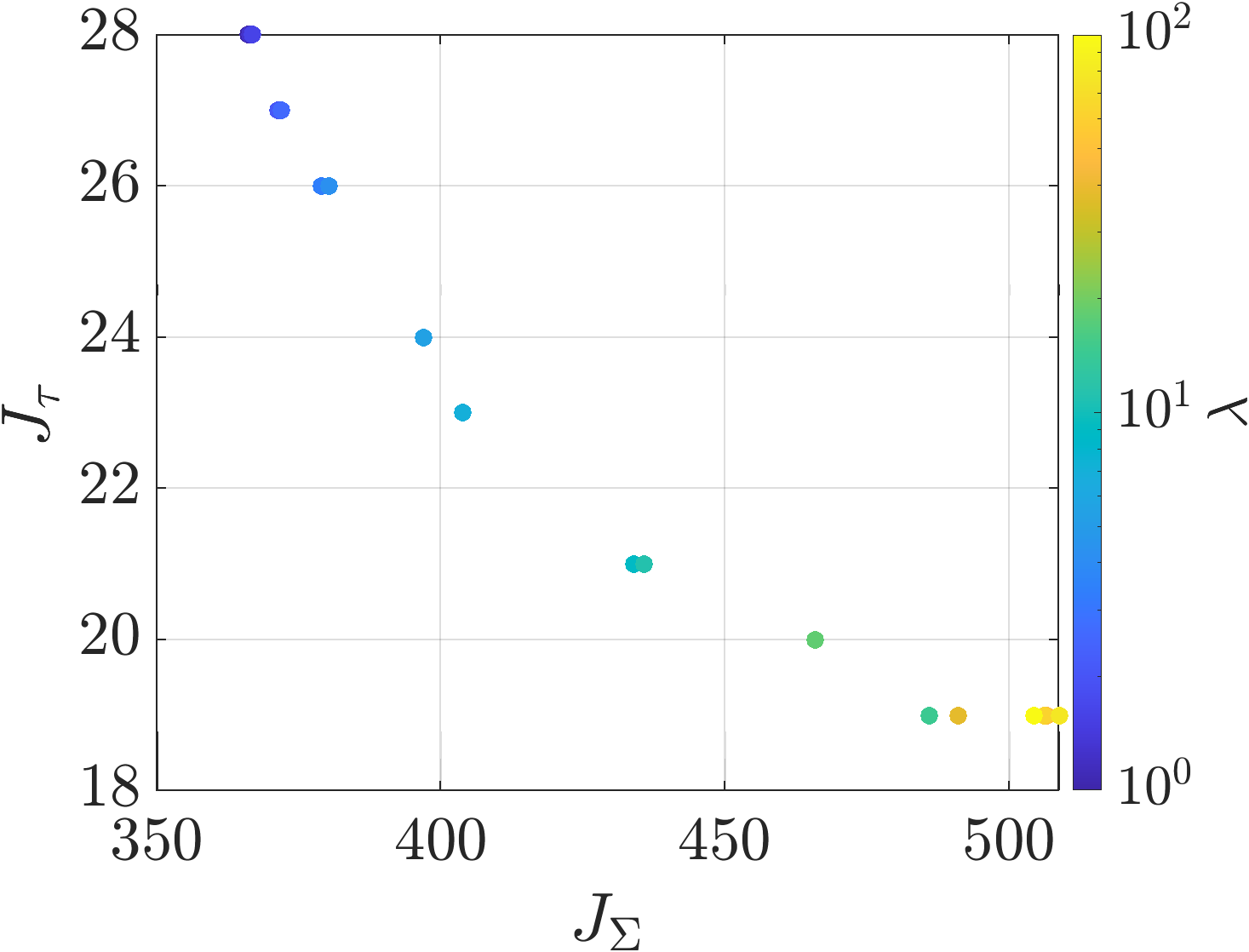}
        \caption{}
        \label{fig:transcient_vs_sparsity}
    \end{subfigure}
    \caption{Left: Algorithm behavior for different $\lambda$. The algorithm starts at the top-left and $*$ shows the final solution.
    Right: $J_{\Sigma}$ vs $J_{\tau}$ at algorithm termination for different $\lambda$.}
    \label{fig:trade_studies}
\end{figure}
\cref{fig:lambda_variations} shows the transient cost $J_{\Sigma}$
vs the number of nonzero feedback gain matrices $J_{\tau}$ for several values of $\lambda$.
As the iteration progresses, the transient cost increases, while 
sparsity is improved, i.e. the algorithm traverses the line in \cref{fig:lambda_variations} from 
top-left to bottom-right.
For sufficiently large $\lambda$, (in this case $\lambda= 50, 100$),
the algorithm behavior becomes similar, and increasing $\lambda$ further does not improve sparsity.
% For this case, $J_{\Sigma}$ increases without $J_{\tau}$ decreasing.
% For larger $\lambda$, the first few iterations only increase the transient cost, 
% but after this initial phase, most subsequent iterations improve the sparsity of the solution.
% This initial phase is where the unweighted regularization gets stuck and cannot induce sparsity, and shows the benefit of the iteratively reweighting scheme.

\cref{fig:transcient_vs_sparsity} shows the $J_{\Sigma}$-$J_{\tau}$ plot at the end of the algorithm for different values of $\lambda$.
Here, 20 values of $\lambda$ are chosen to be logarithmically spaced between $1$ and $100$.
The plot clearly shows the trade-off between the two objectives
based on the value of $\lambda$.
% For larger $\lambda$, $J_{\Sigma}$ is larger, while $J_{\tau}$ is smaller.
% We can observe that the final value does not change much for larger $\lambda$.
% From the two figures in \cref{fig:trade_studies}, we can conclude that in order to obtain the sparsest solution, 
% we should choose a large $\lambda$. 
% If a user desires a solution with more balance 
% between the two objectives, they can still use the same large $\lambda$ and terminate the algorithm early.

\subsection{Ineffectivity of Simple Regularization}
Here, we show the result from solving \cref{eq:regularized_covariance_steering}, i.e. unweighted regularization.
The resulting $\lambda_{\max}(Y_k)$ for each $k$ is shown in \cref{fig:K_frobenius_regularized_vs_original}. This example tested $\lambda = 10^4, 10^{10}$.
For comparison, values from standard covariance steering are also shown.
We can see that simple regularization does not induce sparsity 
in the control input covariance, even for large $\lambda$.
This matches the result in \cref{fig:lambda_variations} for IRL1P, where 
the first iterations of the algorithm do not improve the sparsity.
\begin{figure}[htb]
    \centering
    \includegraphics[width=0.6\linewidth]{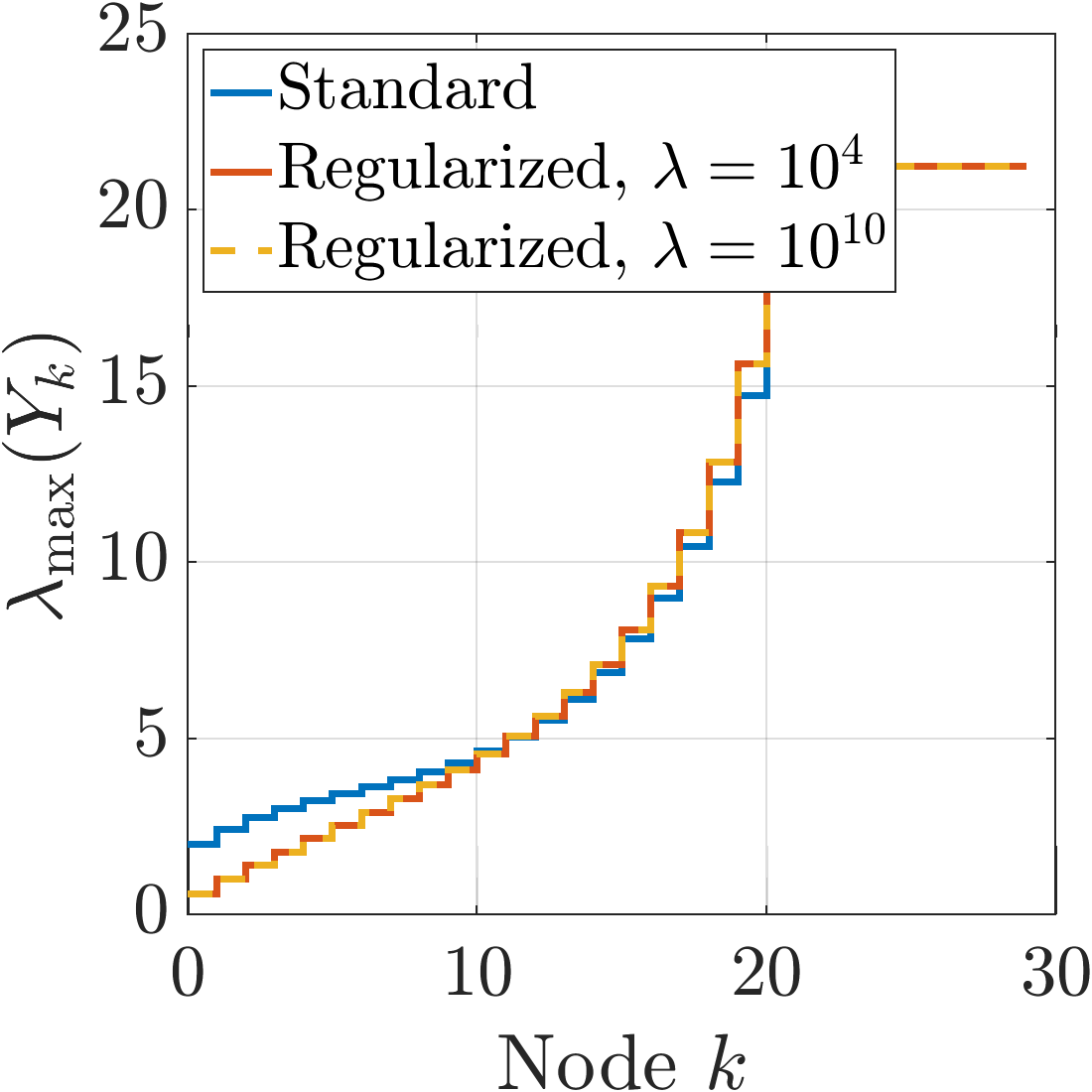}
    \caption{$\lambda_{\max}(Y_k)$, standard/regularized covariance steering}
    \label{fig:K_frobenius_regularized_vs_original}
\end{figure}

\section{Conclusions}
We demonstrate chance-constrained, hands-off covariance steering by applying iteratively reweighted $\ell_{1,p}$-norm minimization 
as a means of approximately solving $\ell_0$-norm regularization in the feedback gain matrices.
We prove that the lossless convexification property of covariance steering holds under the additional regularization term.
The method efficiently explores the trade between transient cost and sparsity in the feedback gain matrices
by solving a series of convex optimization problems.
Future works include application to path planning under chance constraints and nonlinear dynamics.

\balance % balance the columns

\bibliographystyle{ieeetr}
\bibliography{cdc2025.bib}

\end{document}